\newtheorem{theorem}{Theorem}[section]
\newtheorem{lemma}[theorem]{Lemma}
\newtheorem{proposition}[theorem]{Proposition}
\newtheorem{corollary}[theorem]{Corollary}
\newtheorem{example}[theorem]{Example}
\newtheorem{question}{Question}
\begin{document}

\title[indecomposable modules are pure-projective or pure-injective]{RINGS WHOSE INDECOMPOSABLE MODULES ARE
PURE-PROJECTIVE OR PURE-INJECTIVE}

\author{Fran\c{c}ois Couchot}
\address{Normandie Univ, UNICAEN, CNRS, LMNO, 14000 Caen, France}
\email{francois.couchot@yahoo.com} 

\keywords{pure-projective module; pure-injective module; maximal valuation ring; discrete valuation domain of rank one; Gelfand ring; clean ring; arithmetical ring} 

\subjclass[2010]{13C05, 13F13, 13F05, 13F30, 16L30, 16S50.}

\begin{abstract}
Let $\mathcal{P}$ be the class of  rings for which every indecomposable right module is pure-projective or pure-injective. When $R$ is a Noetherian local commutative ring of maximal ideal $P$, it is proven that  $R\in\mathcal{P}$ if and only if $R$ is either an artinian valuation ring or  a discrete valuation domain of rank one with rank($\widetilde{R}$)$\leq 2$ where $\widetilde{R}$ is the completion of $R$ in its $P$-adic topology. Let $R$ be a commutative ring. Then $R\in\mathcal{P}$ if and only if $R$ is a clean arithmetical ring with $R_P\in\mathcal{P}$ for each maximal ideal $P$ of $R$. Moreover, $R$ is a semi-perfect ring when it is Noetherian. Some examples of commutative rings of the class $\mathcal{P}$ are given. 
\end{abstract}

\maketitle

In this paper we give a partial answer to the following question posed by D. 
Simson in \cite[Problem 3.2]{Sim98}:

\textit{”Give a characterization of rings $R$ for which every indecomposable right $R$-module is pure-projective or pure-injective. Is every such a semi-perfect ring $R$ right Artinian or right pure-semisimple?”}

A first partial answer to this question is \cite[Theorem 4.2]{GuSi01} where Guil Asensio and Simson proved the following: \textit{let $\mathcal{A}$ be a locally finitely presented Grothendieck category. Then $\mathcal{A}$ is pure-semisimple if and only if every indecomposable object of $\mathcal{A}$ is pure-projective.} 

So, we have the following theorem:
\begin{theorem}
Let $R$ be a ring for which each indecomposable right module is pure-projective. Then $R$ is right pure-semisimple.
\end{theorem}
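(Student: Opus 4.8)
The plan is to reduce the assertion to a characterization of right pure-semisimple rings phrased in terms of their indecomposable pure-injective modules, which is the result of Stenström mentioned above; in the form I would use it, it says that $R$ is right pure-semisimple as soon as every indecomposable pure-injective right $R$-module is finitely presented. Granting this, the whole problem collapses to a single claim: the hypothesis forces each indecomposable pure-injective right module to be finitely presented. So the bulk of my work is to prove that claim, and it is worth noting in advance that only those indecomposables which happen to be pure-injective will actually be used.

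Let $N$ be an indecomposable pure-injective right $R$-module. First I would record two standard facts about it. On the one hand, the endomorphism ring of an indecomposable pure-injective (algebraically compact) module is local, so by Warfield's theorem $N$ has the full exchange property. On the other hand, the hypothesis says $N$ is pure-projective, and a module is pure-projective exactly when it is a direct summand of a direct sum of finitely presented modules; hence there is an isomorphism $\bigoplus_{i\in I}F_i\cong N\oplus C$ with each $F_i$ finitely presented.

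Now I would exploit the exchange property against this decomposition. It produces splittings $F_i=B_i\oplus B_i'$ with $\bigoplus_{i\in I}F_i=N\oplus\bigoplus_{i\in I}B_i$, and comparing the two expressions gives $N\cong\bigoplus_{i\in I}B_i'$. Since $N$ is indecomposable and nonzero, exactly one $B_i'$ is nonzero, so $N$ is isomorphic to a direct summand $B_{i_0}'$ of the finitely presented module $F_{i_0}$. As a direct summand of a finitely presented module is again finitely presented, $N$ is finitely presented, which is what the reduction demanded. Feeding this into the cited characterization yields that $R$ is right pure-semisimple.

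Because the introduction advertises the deduction as easy, I do not anticipate a deep obstacle in the part I control; the points needing care are the manipulation of the exchange property over an arbitrary index set $I$ (to conclude that $N$ sits inside a single $F_{i_0}$ rather than being spread across infinitely many summands) and the precise matching of the conclusion ``every indecomposable pure-injective is finitely presented'' with the exact hypotheses of Stenström's theorem. The genuine depth is hidden inside that cited theorem, whose converse direction must rule out superdecomposable pure-injective modules and assemble a global direct-sum decomposition; since we are entitled to invoke it, that depth does not fall to us.
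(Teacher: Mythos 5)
Your argument is correct, but it follows a genuinely different route from the paper. The paper's proof is a direct two-step deduction: by the Stenstr\"om--Azumaya criterion (\cite[Proposition~1.13]{Azu92}), a module $M$ is pure-projective provided every pure submodule $N$ with $M/N$ indecomposable is a direct summand; under the hypothesis each such quotient $M/N$ is pure-projective, so the pure-exact sequence $0\to N\to M\to M/N\to 0$ splits, hence \emph{every} right module is pure-projective and $R$ is right pure-semisimple. You instead pass through the characterization of right pure-semisimple rings by their indecomposable pure-injectives, and your verification that each indecomposable pure-injective $N$ is finitely presented is sound: $\mathrm{End}(N)$ is local by \cite[Theorem~9]{ZHZ78}, so by Warfield's theorem the indecomposable $N$ has the full (not merely finite) exchange property, and exchanging against a decomposition $\bigoplus_{i\in I}F_i\cong N\oplus C$ with $F_i$ finitely presented exhibits $N$ as a direct summand of a single $F_{i_0}$, hence finitely presented. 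Two remarks. First, the ``result of Stenstr\"om'' alluded to in the introduction is almost certainly the pure-projectivity criterion above, not the characterization you invoke; the theorem you need (right pure-semisimplicity is equivalent to every indecomposable pure-injective right module being finitely presented) is a real but substantially deeper result, found e.g.\ in Prest's \emph{Model Theory and Modules} and in work of Zimmermann-Huisgen and Zimmermann, so you should cite it precisely rather than attribute it to Stenstr\"om. Second, your route, while resting on heavier machinery, actually yields a formally stronger statement: you only use pure-projectivity of the indecomposable \emph{pure-injective} modules, so your argument shows that this weaker hypothesis already forces right pure-semisimplicity, whereas the paper's argument uses the hypothesis for all indecomposable quotients $M/N$.
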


We can also deduce this theorem from \cite[Proposition 1.13]{Azu92}, a result proven by Stenstr\"om: \textit{let $R$ be a ring and $M$ a right $R$-module. Then $M$ is pure-projective if and only if every pure submodule $N$ for which $M/N$ is indecomposable is a direct summand.} 

Let $\mathcal{P}$ be the class of  rings for which every indecomposable right module is pure-projective or pure-injective. In the sequel we study this question when $R$ is a commutative ring of the class $\mathcal{P}$. A complete characterization is given when $R$ is a locally Noetherian ring. In the general case, it remains to solve Question \ref{uniserial} about indecomposable modules over a maximal valuation ring. We show that $R$ is a clean ring and an elementary divisor ring if $R$ is a ring of the class $\mathcal{P}$. Moreover, for each maximal ideal $P$ of $R$, $R_P$ is an almost maximal valuation ring. So, if $R$ is a semi-perfect ring of the class $\mathcal{P}$, $R$ is not necessarily Artinian and some examples are given, but $R$ is pure-semisimple if $R$ is perfect. To get the main result of the first section a slight generalization of a result by Zanardo in \cite{Zan92} is proven and we use the properties of Gelfand rings to get the main result of the second section. Moreover, it is easy to see that the class $\mathcal{P}$ is preserved by Morita equivalence. This allows us to give some examples in the non-commutative case in the third section. 

Below, and in the sections too, some definitions are given but not all. The reader can refer to the following mathematical books \cite{AnFu92, Wis91, FuSa85, FuSa01}. All rings are associative and unitary, and all modules are unitary. A short exact sequence $\Sigma$ of right $R$-modules is \textit{pure-exact} if it remains exact when tensoring it with any left $R$-module. A right $R$-module $F$ is \textit{pure-injective} if for every pure-exact sequence $\Sigma$ of right $R$-modules, the sequence $\mathrm{Hom}_R(\Sigma,F)$ is exact. A right $R$-module $F$ is \textit{pure-projective} if for every pure-exact sequence $\Sigma$ of right $R$-modules the sequence $\mathrm{Hom}_R(F,\Sigma)$ is exact. Recall that a ring is said to be  right {\it pure-semisimple} if each right module is pure-injective.

When $R$ is a commutative ring we denote respectively $\mathrm{Spec}\ R$ and $\mathrm{Max}\ R$ the space of prime ideals and maximal ideals of $R$ with the Zariski topology. If $A$ a subset of $R$, then we denote  
\[V(A) = \{ P\in\mathrm{Spec}\ R\mid A\subseteq P\},\ D(A) = \{ P\in\mathrm{Spec}\ R \mid A\not\subseteq P\},\]
\[V_M(A)=V(A)\cap\mathrm{Max}\ R\ \mathrm{and}\ D_M(A) =D(A)\cap\mathrm{Max}\ R.\]

\section{Indecomposable modules are pure-projective or pure-injective: local case}

The following lemma will be useful in the sequel. 
\begin{lemma}\label{L:fp}
Let $R$ be a (non necessarily commutative) ring and $M$ a left $R$-module. Assume that $M$ verifies one of the following three conditions:
\begin{enumerate}
\item $M$ is finitely generated;
\item $M$ is indecomposable and there exists a pure-exact sequence of left $R$-modules $0\rightarrow K\rightarrow \bigoplus_{i\in I} F_i\rightarrow M\rightarrow 0$, where $F_i$ is a finitely generated $R$-module with a local endomorphism ring, for each $i\in I$;
\item $M$ is indecomposable and $M=\bigcup_{i\in I}M_i$ where $M_i$ is a finitely generated submodule of $M$ with a local endomorphism ring for each $i\in I$.
\end{enumerate}
Then $M$ is pure-projective if and only if it is finitely presented.
\end{lemma}
\begin{proof}
We only have to prove that $M$ is finitely presented if it is pure-projective. By \cite[34.1]{Wis91} $M$ is a direct summand of a direct sum of finitely presented left $R$-modules. 

$(1)$. If $M$ is finitely generated, it is a direct summand of a finite direct sum of finitely presented left $R$-modules. Hence $M$ is finitely presented. 

$(2)$. If $M$ satisfies the second condition, from the Krull-Schmidt theorem and the fact that $M$ is indecomposable we deduce that $M\cong F_i$ for some $i\in I$. Hence $M$ is finitely generated and from $(1)$ we deduce that it is finitely presented.

$(3)$ follows from $(2)$ because the sequence $0\rightarrow\ker{u}\rightarrow\bigoplus_{i\in I}M_i\xrightarrow{u}M\rightarrow 0$ is pure-exact, where $u$ is induced by the family of inclusion maps $(M_i\rightarrow M)_{i\in I}$ (see \cite[33.9.(2)]{Wis91}).
\end{proof}

Recall that a left (or right) module over a ring $R$ is {\it uniserial} if its set of submodules is totally ordered by inclusion. A commutative ring $R$ is a {\it valuation ring} if it is uniserial as $R$-module. A uniserial module $U$ is {\it linearly compact} (in its discrete topology) if every totally ordered family of cosets $(u_i+U_i)_{i\in I}$ has a nonempty intersection. A valuation ring $R$ is {\it maximal} if it is a linearly compact module and $R$ is {\it almost maximal} if $R/A$ is maximal for each nonzero ideal $A$.

\begin{proposition}\label{P:maximal}
For any valuation ring $R$ the following conditions are equivalent:
\begin{enumerate}
\item $R$ is maximal;
\item $R$ is a pure-injective $R$-module;
\item each uniserial $R$-module is pure-injective;
\item each uniserial $R$-module is linearly compact.
\end{enumerate} 
\end{proposition}
\begin{proof}
It is obvious that $(4)\Rightarrow (1)$ and $(3)\Rightarrow (2)$. By \cite[Proposition 4]{Couc06} $(1)\Leftrightarrow (2)$ and by \cite[Propositions 5]{Couc06} $(2)\Rightarrow (3)$.

$(1)\Rightarrow (4)$. Let $U$ be a uniserial $R$-module. If $U$ is finitely generated then $U=Ru\cong R/(0:u)$. In this case we easily conclude that $U$ is linearly compact. If $U$ is not finitely generated, let $(u_i+U_i)_{i\in I}$ be a totally ordered family of cosets of $U$. The implication is obvious if $U_i=U$ for each $i\in I$. Now assume there exists $i_0\in I$ such that $U_{i_0}\ne U$. In this case there exists $v\in U$ such that $u_{i_0}+U_{i_0}\subseteq Rv$ and for each $i\in I$ such that $U_i\subseteq U_{i_0}$ we also have $u_i+U_i\subseteq Rv$. Since $Rv$ is linearly compact we have $\bigcap_{i\in I}u_i+U_i\ne\emptyset$. Hence $U$ is linearly compact.
\end{proof}

Let $R$ be a valuation ring and $P$ its maximal ideal. Assume there exists a nonzero prime ideal $L$ with $L\ne P$. Then for each $s\in P\setminus L$ we have $L=sL$. By Nakayama Lemma $L$ is not finitely generated and $R$ is not Noetherian. So, if $R$ is Noetherian either $R$ is Artinian or $R$ is a {\it rank one discrete} valuation domain (we use the terminology of \cite{FuSa01}). In this last case, if $\widetilde{R}$ is the completion of $R$ in its $P$-adic topology, $\widetilde{R}$ is the pure-injective hull of $R$ (See \cite[Proposition XII.5.1]{FuSa01}). Moreover, when $\widetilde{R}$ is of finite rank, $R$ is called a {\it Nagata} valuation domain in \cite{Zan92}.

In \cite[Theorem]{Zan92} Zanardo used Kurosh invariants to show that each indecomposable torsionfree module $M$ of finite rank verifies rank($M$)$\leq 2$ if $R$ satisfies the assumptions of the following theorem. Another proof is done below and it is also shown that there is no indecomposable torsionfree module of infinite rank. Let $R$ be a valuation domain, $M$ an $R$-module and $N$ a submodule of $M$. Let $N'$ be the inverse image of the torsion submodule of $M/N$ by the natural map $M\rightarrow M/N$. It is easy to show that $M/N'$ is flat, whence $N'$ is a pure submodule of $M$. It is called the {\it purification} of $N$ in $M$ (\cite[I.8]{FuSa01}). When $M$ is torsionfree, rank($N'$)$=$rank($N$).

\begin{theorem}\label{T:torsionfree}
Let $R$ be a discrete valuation domain of rank $1$, $P=pR$ its maximal ideal and $Q$ its quotient field. Assume that rank($\widetilde{R}$)$=2$. Then each indecomposable torsionfree $R$-module is isomorphic to one of the modules $R$, $Q$, $\widetilde{R}$. 
\end{theorem}
\begin{proof}
Let $M$ be an indecomposable torsionfree $R$-module. There exists an exact sequence $0\rightarrow F\rightarrow M\rightarrow D\rightarrow 0$, where $F$ is a free module, $D$ a $Q$-vector space and $F$ satisfies the following conditions: $F$ is a pure submodule of $M$ and $F/PF\cong M/PM$ (for example see \cite[Proposition 21]{Coucho07}). If rank($M$)$=1$ either $M=F\cong R$ or $M=D\cong Q$, and if rank($M$)$=2$ we will show that $M\cong\widetilde{R}$. In this case rank($F$)$=1$ and rank($D$)$=1$. So, there exists an isomorphim $\alpha:F\rightarrow R$ which extends to a homomorphism $\beta:M\rightarrow\widetilde{R}$ which induces a homomorphism $\gamma:D\rightarrow\widetilde{R}/R$. We have the following commutative diagram:

\[\begin{matrix}
{} & {} &\ \ 0 & {} & {} & {} & {} & {} & {}\\
{} & {} &\ \ \downarrow & {} & {} & {} & {} & {} & {} \\
0 & \rightarrow & \ \ F & \rightarrow & \ \ M & \rightarrow & \ \ D & \rightarrow & 0 \\
 {} & {} & {\scriptstyle{\alpha}}\downarrow & {} & {\scriptstyle{\beta}}\downarrow & {} & {\scriptstyle{\gamma}}\downarrow & {} & {} \\
0 & \rightarrow & \ \ R & \rightarrow & \ \ \widetilde{R} & \rightarrow & \ \ \widetilde{R}/R & \rightarrow & 0 \\
{} & {} &\ \ \downarrow & {} & {} & {} & {} & {} & {} \\
{} & {} &\ \ 0 & {} & {} & {} & {} & {} & {} 
\end{matrix}\]

Since $D$ and $\widetilde{R}/R$ are $Q$-vector spaces of dimension $1$, either $\gamma $ is an isomorphism and so is $\beta$, or $\gamma=0$ and in this case we get a contradiction because by Snake Lemma $\ker(\beta)\cong D$ and it is a direct summand of $M$. By way of contradiction assume that rank($M$)$>2$. Since $M$ is indecomposable we have $F\ne 0$ and $D\ne 0$. Let $x\in M\setminus F$ and $U$ be the purification of $Rx$ in $M$. Since $U$ is of rank $1$ and $M$ is indecomposable, $U$ is isomorphic to $R$. So, if $u$ generates $U$ then $u\notin F$. The equality $M=F+PM$ implies that there exist $y\in F$ and $z\in M$ such that $u=y+pz$. Clearly $u\notin pU$ and consequently $u\notin pM$ because $U$ is a pure submodule. It follows that $y\notin pM$. If $y\in U$, the purity of $U$ in $M$ implies that $(1-pr)u=y$ for some $r\in R$ and we get that $u\in F$, a contradiction. So, $y\notin U$. Let $V$ be the purification of $U+Ry$ in $M$. The indecomposability of $M$ implies that $V$ is free of rank $2$. Clearly $u\notin pV$ and $y\notin pV$. Assume $au=by$ for some $a,b\in R$. Then $u\notin F$ implies $a\in P$ and $y\notin U$ implies $b\in P$. So, if $\bar{u}, \bar{y}$ are the respective images of $u,y$ in $V/pV$, $\{\bar{u},\bar{y}\}$ is a basis of $V/pV$. By Nakayama Lemma $\{u,y\}$ is a basis of $V$. From $V$ a pure submodule of $M$ we deduce that $u=y+psu+pty$ for some $s,t\in R$. We get a contradiction. Hence rank($M$)$\leq 2$.
\end{proof}

A commutative local ring $R$ is said to be {\it Henselian} if each commutative module-finite $R$-algebra is a finite product of local rings.

\begin{theorem}\label{T:main}
Let $R$ be a commutative local ring of maximal ideal $P$. Consider the following conditions:
\begin{enumerate}
\item each indecomposable $R$-module is pure-projective or pure-injective;
\item $R$ is a valuation ring;
\item $R$ is a maximal valuation ring and each indecomposable $R$-module is uniserial;
\item either $R$ is pure-semisimple or $R$ is a discrete valuation domain of rank $1$ with rank($\widetilde{R}$)$\leq 2$;
\item each indecomposable $R$-module is pure-injective;
\end{enumerate}
Then the following assertions hold:
\begin{itemize}
\item $(1)\Rightarrow (2)$;
\item if $R$ is not Noetherian then $(1)\Leftrightarrow (3)\Leftrightarrow (5)$;
\item if $R$ is Noetherian then $(1)\Leftrightarrow (4)$;
\item $(1)$ implies that $R$ is Henselian.
\end{itemize}
\end{theorem}
\begin{proof}
$(1)\Rightarrow (2)$. By way of contradiction suppose that $R$ is not a valuation ring. As in the proof of \cite[Lemma III.3]{Cou07} assume that there exist $a,\ b\in P$ such that $a\notin Rb$ and $b\notin Ra$. Let $R'=R/(Pa+Pb)$ and $M$ be the $R'$-module generated by $(x_n)_{n\in\mathbb{N}}$ with the relations $(ax_n=bx_{n+1})_{n\in\mathbb{N}}$. It is proven that $M$ is indecomposable and End$_R(M)$ is not local. So, $M$ is not pure-injective by \cite[Theorem 9(3)]{ZHZ78}. For each positive integer $n$ let $M_n$ be the submodule of $M$ generated by $\{x_m\mid 0\leq m\leq n\}$ ($M_n$ is finitely presented over $R'$ but not necessarily over $R$). By \cite[Propositions 3.3 and 3.1]{Cou11} End$_R(M_n)$ ($=$End$_{R'}(M_n)$) is local ($M_n\cong D(W_{1,n-1,n})$). Clearly $M=\bigcup_{n\in\mathbb{N}}M_n$ and consequently from Lemma \ref{L:fp} we deduce that $M$ is not pure-projective. Hence $R$ is a valuation ring.

The implication $(5)\Rightarrow (1)$ is obvious.

$(1)\Rightarrow (3)$ and $(5)$. Let $U$ be a uniserial $R$-module. For each nonzero $u\in U$ we have that End($Ru$)$=R/(0:u)$ is a local ring. So, if $U$ is not finitely presented then we deduce from Lemma \ref{L:fp} that $U$ is pure-injective. Let $A$ be a non-finitely generated ideal of $R$. Then, by Proposition \ref{P:maximal} $A$ and $R/A$ are linearly compact. By \cite[Proposition 9]{Zel53} $R$ is also linearly compact. Hence $R$ is maximal. Let $M$ be an indecomposable module. There exists a pure-exact sequence of $R$-modules $0\rightarrow K\rightarrow \bigoplus_{i\in I} F_i\rightarrow M\rightarrow 0$, where $F_i$ is a cyclic $R$-module for each $i\in I$. Then End$_R$($F_i$) is local for each $i\in I$. So, if $M$ is pure-projective, it is finitely presented by Lemma \ref{L:fp} and cyclic; it is also pure-injective because $R$ is maximal. Hence each indecomposable $R$-module $M$ is pure-injective and by \cite[Theorem 5.4]{Fac87} $M$ is either the pure-injective hull of a uniserial module or an essential extension of a cyclic module. Since $R$ is maximal, $M$ is uniserial by \cite[Theorem]{Gil71}.

$(3)\Rightarrow (1)$ Let $M$ be an indecomposable $R$-module. Since $R$ is a maximal valuation ring and $M$ a uniserial $R$-module, $M$ is pure-injective by Proposition \ref{P:maximal}. 

$(1)\Rightarrow (4)$. Since $R$ is a Noetherian valuation ring, either $R$ is Artinian, in this case $R$ is pure-semisimple, or $R$ is a discrete valuation domain of rank $1$. If $R\ne\widetilde{R}$ let $x\in\widetilde{R}\setminus R$ and $U$ be the purification of $R+Rx$ in $\widetilde{R}$. By \cite[Example XV.6.1.]{FuSa01} $U$ is indecomposable and it is a flat module of rank $2$. So $U$ is not free and consequently it is not pure-projective. Hence $U$ is pure-injective, $U=\widetilde{R}$ and rank($\widetilde{R}$)$\leq 2$.

$(4)\Rightarrow (1)$. It is obvious if  $R$ is not a domain. Assume that $R$ is a domain and rank($\widetilde{R}$)$\leq 2$. Let $M$ be an indecomposable $R$-module. If its torsion submodule $^tM$ is nonzero there exists a uniserial module $U\ne 0$ which is a pure submodule of $^tM$ and of $M$ too by \cite[Theorem XI.6.11]{FuSa01}. But by \cite[Theorem XIII.4.6]{FuSa01} $U$ is pure-injective. We conclude that $U=M$. If $M$ is torsionfree, by Theorem \ref{T:torsionfree}  $M$ is isomorphic to one of the  $R$-modules $R$, $Q$, $\widetilde{R}$.

Each maximal valuation ring $R$ is Henselian by \cite[Theorem II.7.7]{FuSa01} even if $R$ is not an integral domain. If $R$ is a discrete valuation domain of rank one, the property rank $\widetilde{R}\leq 2$ implies that $R$ is Henselian too by \cite[Proposition 1]{Rib64}.
\end{proof}

\begin{question}\label{uniserial}
\textnormal{What are the non-Noetherian maximal valuation rings whose indecomposable modules are uniserial?}
\end{question}

Now we can give an example of a non-Artinian commutative semi-perfect ring whose indecomposable modules are pure-injective.

\begin{example}\label{E:local1}
\textnormal{Let $R$ be a complete discrete valuation domain of rank $1$ . By Theorem \ref{T:main} each indecomposable module is pure-injective. Clearly $R$ is semi-perfect but not Artinian.}
\end{example}

\begin{corollary}
Let $R$ be a commutative local ring of maximal $P$. Then the following conditions are equivalent:
\begin{enumerate}
\item each indecomposable $R$-module is pure-projective or pure-injective and there exist indecomposable $R$-modules which are not pure-injective;
\item $R$ is a discrete valuation domain of rank $1$ with rank($\widetilde{R}$)$=2$.
\end{enumerate} 
Each indecomposable $R$-module which is not pure-injective is isomorphic to $R$ when these conditions hold.
\end{corollary}

\begin{example}\label{E:local}
\textnormal{There is at least one example of a local ring $R$ satisfying the conditions of the previous corollary: the one of characteristic $2$ given by Nagata in \cite[Example E33, p.207]{Nag62}.}
\end{example}

\begin{corollary}
Let $R$ be a commutative perfect ring for which each indecomposable module is pure-projective or pure-injective. Then $R$ is pure-semisimple.
\end{corollary}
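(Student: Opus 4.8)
The plan is to reduce to the local case and then show that the hypotheses force $R$ to be an artinian chain ring, over which every module decomposes into cyclics. First, since $R$ is perfect it is semiperfect, hence a finite product $R \cong \prod_{i=1}^{n} R_i$ of commutative local rings, and each factor is a localization $R_{P_i}$ at a maximal ideal, which by Theorem~\ref{T:pure-semi-simple} is a maximal valuation ring with all indecomposables uniserial. Pure-semisimplicity of a finite product is equivalent to pure-semisimplicity of each factor: a module over $\prod_i R_i$ splits along the central idempotents into $R_i$-modules, and being a direct sum of (finitely generated) indecomposables can be checked factorwise. Moreover every indecomposable $R_i$-module is an indecomposable $R$-module, hence pure-injective, so the hypothesis passes to each factor. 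Thus I may assume $R$ is a local perfect maximal valuation ring; write $J$ for its maximal ideal and $k=R/J$.

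The heart of the argument is to show that this $R$ is artinian. Being perfect, $J$ is T-nilpotent, so in particular every element of $J$ is nilpotent; applied to the module $J$ itself, T-nilpotency yields the implication $J^2=J \Rightarrow J=0$, so $J \neq J^2$ whenever $J \neq 0$. I would then exploit the valuation structure: picking $a \in J \setminus J^2$ and using that the ideals of $R$ are totally ordered, one checks that for every $b \in J$ the alternative $(a)\subsetneq (b)$ would write $a=bc$ with $c$ a unit (else $a\in J^2$), a contradiction, whence $b \in (a)$ and $J=(a)$ is principal. Writing $J=(t)$ with $t$ nilpotent, say $t^m=0$, gives $J^m=0$ and a finite chain $R \supset J \supset \cdots \supset J^m = 0$ whose successive quotients $J^i/J^{i+1}$ are cyclic $k$-modules, hence simple or zero; therefore $R$ has finite length and is an artinian chain ring.

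Finally, over a commutative artinian chain ring the nonzero indecomposable modules are exactly the finitely many cyclic modules $R/J^i$, and by Köthe's theorem every $R$-module is a direct sum of such cyclics. Since each summand is finitely generated, every $R$-module is a direct sum of finitely generated indecomposables, which is one of the standard characterizations of (right) pure-semisimplicity; equivalently, $R$ is an artinian ring of finite representation type and is pure-semisimple by Auslander's theorem. Transporting this conclusion back through the product decomposition $R \cong \prod_i R_i$ shows the original ring is pure-semisimple.

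I expect the main obstacle to be precisely the artinianity step, and within it the passage from T-nilpotency to $J \neq J^2$ and then, via the linear ordering of ideals, to the principality and nilpotency of $J$. This is also the step where the perfectness hypothesis does its real work, since it is exactly what rules out non-artinian examples such as the complete discrete valuation domain of the first Example (which, being a domain that is not a field, fails to be perfect). Once $R$ is identified as an artinian chain ring, the pure-semisimplicity is entirely classical.
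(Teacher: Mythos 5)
Your proof is correct and follows essentially the same route as the paper, which simply observes that $R$ is a finite product of artinian valuation rings (by Theorem~\ref{T:pure-semi-simple} together with perfectness) and hence pure-semisimple. You have merely supplied the details the paper leaves implicit: that T-nilpotency of the maximal ideal combined with the valuation structure forces each local factor to be an artinian chain ring, and that such rings are pure-semisimple by K\"othe's theorem.
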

\begin{proof} $R$ is a finite product of local rings which are Artinian valuation rings by Theorem~\ref{T:main}. Hence $R$ is pure-semisimple. 
\end{proof}

\begin{question}\label{perfect} \textnormal{Let $R$ be a non-commutative ring. Assume that $R$ is left (or right) perfect. Suppose that each indecomposable left $R$-module is pure-projective or pure-injective. Is $R$ left (or right) pure-semisimple?}
\end{question}

\section{Indecomposable modules are pure-projective or pure-injective: global case}

The following proposition will be useful in the sequel.

\begin{proposition}\label{P:epi}
Let $R$ and $R'$ be  commutative rings, $\alpha: R\rightarrow R'$ a ring epimorphism (in the categorical sense). We consider an $R'$-module $M$. The following assertions hold:
\begin{enumerate}
\item $M$ is indecomposable over $R$ if and only if it is indecomposable over $R'$;
\item $M$ is pure-injective over $R$ if and only if it is pure-injective over $R'$;
\item if $M$ is pure-projective over $R$ then $M$ is pure-projective over $R'$ too. 
\end{enumerate}
\end{proposition}
\begin{proof}
$(1)$. Assume that $M$ is indecomposable over $R'$. Suppose that $M= K\oplus L$ where $K$ and $L$ are $R$-submodules of $M$. Then \[M\cong R'\otimes_RM= (R'\otimes_RK)\oplus (R'\otimes_RL)\] and we deduce that either $K\subseteq R'\otimes_RK=0$ or $L\subseteq R'\otimes_RL=0$. The converse is obvious.

$(2)$. Assume that $M$ is pure-injective over $R'$. If $\Sigma$ is a pure-exact short sequence of $R$-modules then $R'\otimes_R\Sigma$ is a pure-exact sequence of $R'$-modules and\\ Hom$_R(\Sigma,M)\cong$Hom$_{R'}(R'\otimes_R\Sigma,M)$. So, $M$ is pure-injective over $R$. Conversely, if $\Sigma$ is a pure-exact short sequence of $R'$-modules, $\Sigma$ is also pure-exact over $R$. It follows that $M$ is pure-injective over $R'$ too.

$(3)$. There exists a pure-exact sequence of $R'$-modules $0\rightarrow K\rightarrow L\rightarrow M\rightarrow 0$ where $L$ is pure-projective over $R'$. This sequence is also pure-exact over $R$. We deduce that $M$ is isomorphic (over $R$ and $R'$) to a direct summand of $L$.
\end{proof}

A commutative ring (integral domain) $R$ is said to be {\it arithmetical} ({\it Pr\"ufer}) if $R_P$ is a valuation ring (domain) for each maximal ideal $P$.

\begin{theorem}\label{T:domain}
Let $R$ be an integral domain. Then the following conditions are equivalent:
\begin{enumerate}
\item each indecomposable $R$-module is pure-projective or pure-injective;
\item $R$ verifies one of these assertions:
\begin{enumerate}
\item $R$ is a maximal valuation domain and each indecomposable module is uniserial;
\item $R$ is a discrete valuation domain of rank $1$ with rank($\widetilde{R}$)$\leq 2$.
\end{enumerate}
\end{enumerate}
\end{theorem}
\begin{proof}
$(2)\Rightarrow (1)$ follows from Theorem \ref{T:main}.

$(1)\Rightarrow (2)$. By Proposition \ref{P:epi} $R_P$ satisfies the condition $(1)$ for each maximal ideal $P$ and it is a valuation domain by Theorem \ref{T:main}. Consequently $R$ is a Pr\"ufer domain. So, $R$ is integrally closed by \cite[Corollary I.3.5]{FuSa01}. Now, we will show that $R$ is local.

Let $P$ be a finitely generated maximal ideal. By way of contradiction suppose that $R$ is not local. So, there is another maximal ideal $P'$. Let $R'=S^{-1}R$ where $S=R\setminus (P\cup P')$. By Proposition \ref{P:epi} $R'$ also verifies the condition $(1)$. So we may replace $R$ with $R'$. Since $R$ is a semilocal Pr\"ufer domain, each finitely generated ideal of $R$ is principal by \cite[Theorem 5]{Jen66}. Now, we do as in the proof of \cite[Theorem 63]{Mat72}. We have $P=Rp$. Let $p'\in P'\setminus P$. Clearly $p+p'$ is a unit of $R$. We may assume that $p+p'=1$. Let $n$ be a prime integer different to the characteristic of $R/P'$. Consider the polynomial $(X^n-p)$ in $R[X]$. Its image in $(R/P')[X]$ is $(X^n-1)$ and $1$ is a simple root of this polynomial. By Proposition \ref{P:epi} $R_{P'}$  verifies the condition $(1)$. It is Henselian by Theorem \ref{T:main}. By \cite[Theorem II.7.3]{FuSa01} there exists $s\in R_{P'}$ such that $p=s^n$. Since $R$ is integrally closed  $s\in P$. This contradicts that $p$ generates $P$. So, $R$ is local in this case.

Now let $P$ be a non finitely generated maximal ideal of $R$. Then $P$ is a flat module of rank $1$. Hence it is not pure-projective. So, it is pure-injective and its endomorphism ring is local by \cite[Theorem 9(3)]{ZHZ78}. We will show that End$_R(P)=R$. Let $Q$ be the quotient field of $R$. Let $A$ and $B$ be submodules of $Q$. We have $\mathrm{Hom}_R(A,B)\cong\{q\in Q\mid qA\subseteq B\}.$ As in \cite[I.2]{FuSa01} we denote this last set by $B:A$. Clearly $R\subseteq P:P\subseteq R:P$. Since $P$ is not finitely generated we have $P\subset R:(R:P)\subseteq R:R=R$. So, $R:(R:P)=R$ because $P$ is a maximal ideal. Hence $P:P=R:P=R.$

Consequently $R$ is local and we conclude by Theorem \ref{T:main}.   
\end{proof}

A ring $R$ is called {\it Gelfand} if every prime ideal of $R$ is contained in only one maximal ideal. In the following proposition there are some (already known) properties of Gelfand rings useful to show Proposition \ref{P:indecomposable} and Theorem \ref{T:main1} (see for instance \cite{BoVa83} the book by Borceux and Van den Bossche). If $L$ is a prime ideal we denote by $\mu(L)$ the unique maximal ideal containing $L$. We set  $0_P$  the kernel of the natural map $R\rightarrow R_P$ where $P\in\mathrm{Spec}\ R$.

\begin{proposition}
\label{P:Gelfand} Let $R$ be a commutative Gelfand ring. Then $R$ satisfies the following properties:
\begin{enumerate}
\item $\mu: \mathrm{Spec}\ R\rightarrow \mathrm{Max}\ R$ is continuous and $\mathrm{Max}\ R$ is Hausdorff;
\item a closed subset $C$ of $\mathrm{Spec}\ R$ is the inverse image of a closed subset of $\mathrm{Max}\ R$ by $\mu$ if and only if $C=V(A)$ where $A$ is a pure ideal. Moreover, in this case, $A=\bigcap_{P\in C}0_P$.
\item for each maximal ideal $P$, $R_P=R/0_P$;
\item a subset $U$ of $\mathrm{Max}\ R$ is open and closed if and only if there exists an idempotent $e\in R$ such that $U=D_M(e)$;
\item $R$ is indecomposable if and only if $\mathrm{Max}\ R$ is connected.
\end{enumerate}
\end{proposition}
\begin{proof} 
$(1)$. See \cite[Theorem 1.2]{MaOr71}.

$(2)$.
Let $A$ be a pure ideal, and let $P$ be a maximal ideal and $L$ a prime ideal such that $A\subseteq P$ and $L\subseteq P$. Since $A$ is pure, for each $a\in A$ there exists $b\in A$ such that $a=ab$. Then $(1-b)a=0$ and $(1-b)\notin P$, whence  $(1-b)\notin L$ and $a\in L$. So, $L\in V(A)$ and $V(A)$ is the inverse image of a closed subset of $ \mathrm{Max}\ R$ by $\mu$.

Let $C=V(B)$ where $B=\bigcap_{L\in C}L$. Suppose that $C$ is the inverse image of a closed subset of $\mathrm{Max}\ R$ by $\mu$. We put $A=\bigcap_{P\in C}0_P$. Let $b\in B$ and $P\in C$. Then $C$ contains each minimal prime ideal contained in $P$. So, the image of $b$, by the natural map $R\rightarrow R_P$, belongs to the nilradical of $R_P$. It follows that there exist  $0\ne n_P\in\mathbb{N}$ and $s_P\in R\setminus P$ such that $s_Pb^{n_P}=0$. Hence, $\forall L\in D(s_P)\cap C,\ b^{n_P}\in 0_L$. A finite family $(D(s_{P_j}))_{1\leq j\leq m}$ covers $C$. Let $n=\max\{n_{P_1},\dots,n_{P_m}\}$. Then $b^n\in 0_L,\ \forall L\in C$, whence $b^n\in A$. We deduce that $C=V(A)$. Now, we have $A_P=0$ if $P\in V(A)$ and $A_P=R_P$ if $P\in D(A)$. Hence $A$ is a pure ideal.

$(3)$. If $C$ is the inverse image of $\{P\}$ by $\mu$ then $C=V(0_P)$ and $R/0_P$ is a local ring and a flat $R$-module. So, $R_P=R/0_P$.

$(4)$. A subset $U$ of $\mathrm{Max}\ R$ is open and closed if and only if is so its inverse image by $\mu$ and it is well known that a subset $U'$ of $\mathrm{Spec}\ R$ is open and closed if and only if $U'=D(e)$ for some idempotent $e\in R$.

$(5)$ is an immediate consequence of $(4)$.
\end{proof} 

A topological space is called \textit{totally disconnected} if each of its connected components contain only one point. Every Hausdorff topological space $X$ with a base of clopen (closed and open) neighbourhoods is totally disconnected and the converse holds if $X$ is compact (see \cite[29.7 Theorem]{Wil04}). A point $x$ of a topological space $X$ is {\it isolated} if $\{x\}$ is an open subset of $X$.

\begin{lemma}\label{L:topospace}
Let $X\ne\emptyset$ be a connected Hausdorff compact  topological space. Assume that each non-empty proper closed subspace $F$ of $X$ is connected if and only if $F=\{x\}$ for some $x\in X$. Then $X$ contains only one point.
\end{lemma}
\begin{proof}
By way of contradiction suppose there exist $x, y\in X$ with $x\ne y$. Since $X$ is Hausdorff there exist two disjoint open subsets $U,\ V$ of $X$ such that $x\in U$ and $y\in V$. Then the closure $\overline{U}$ of $U$ in $X$ doesn't contain $y$. So, $\overline{U}\ne X$ and $U\ne\overline{U}$ because $X$ is connected. We put $Z= \overline{U}\cap (X\setminus U)$. Since $\overline{U}$ is compact, the property satisfied by $X$ implies that $\overline{U}$ is totally disconnected. So, for each $z\in Z$ there exist two disjoint clopen subsets $U_z, V_z$ of $\overline{U}$ such that $x\in U_z$ and $z\in V_z$. But, $Z$ is closed. So, it is compact and there exists a finite subset $F$ of $Z$ such that $Z\subseteq \bigcup_{z\in F}V_z$. We put $W=\bigcap_{z\in F}U_z$. We get that $W$ is a nonempty clopen subset of $\overline{U}$ and $W\subseteq U$. There exist a closed subset $G$ and an open subset $O$ of $X$ such that $W= \overline{U}\cap G= \overline{U}\cap O$. It follows that $W$ is a closed subset of $X$. But $W\subseteq U\cap O\subseteq \overline{U}\cap O=W$. So, $W$ is a proper clopen subset of $X$. This contradicts that $X$ is connected.
\end{proof}

\begin{proposition}\label{P:indecomposable}
Let $R$ be a commutative ring. Assume that each indecomposable $R$-module is pure-projective or pure-injective. Then $R$ verifies the following conditions:
\begin{enumerate}
\item $R$ is a Gelfand ring;
\item $R$ is a local ring if $R$ is an indecomposable ring.
\end{enumerate}
\end{proposition}
\begin{proof}
$(1)$. Let $L$ be a prime ideal of $R$. Then $R/L$ is an integral domain satisfying $(1)$ by Proposition \ref{P:epi}. From Theorem \ref{T:domain} we deduce that $R/L$ is local. 

$(2)$. Since $R$ is indecomposable $\mathrm{Max}\ R$ is a connected Hausdorff compact topological space by Proposition \ref{P:Gelfand}. Let $F$ be a nonempty proper connected closed subset of $\mathrm{Max}\ R$. Then there exists a pure ideal $B$ of $R$ such that $F=V_M(B)$, and $B$ is not finitely generated: else $B=Re$ for some idempotent of $R$ and $\mathrm{Max}\ R$ is not connected. It follows that $R/B$ is an indecomposable finitely generated $R$-module which is not pure-projective by Lemma \ref{L:fp}. Consequently $R/B$ is a pure-injective module and a local ring by \cite[Theorem 9(3)]{ZHZ78}. So, $F$ contains only one point and $\mathrm{Max}\ R$ satisfies the assumptions of Lemma \ref{L:topospace}. We conclude that $\mathrm{Max}\ R$ contains only one point and $R$ is local.
\end{proof}

A ring $R$ is {\it clean} if each of its elements is the sum of a unit with an idempotent. Recall that $R$ is an \textit{elementary divisor ring} if each finitely presented module is a direct sum of cyclic submodules.

\begin{theorem}\label{T:main1}
Let $R$ be a commutative ring. Then the following conditions are equivalent: 
\begin{enumerate}
\item each indecomposable $R$-module is pure-projective or pure-injective;
\item $R$ is a clean ring and for each maximal ideal $P$, $R_P$ verifies one of the following two conditions:
\begin{enumerate}
\item $R_P$ is a maximal valuation ring and each indecomposable $R_P$-module is uniserial;
\item $R_P$ is a discrete valuation domain of rank $1$ with rank($\widetilde{R_P}$)$\leq 2$;
\end{enumerate}
\end{enumerate} 
Moreover $R$ is an elementary divisor ring if $R$ satisfies these conditions.
\end{theorem}
\begin{proof}
$(1)\Rightarrow (2)$.
By \cite[Theorem I.1]{Cou07}, to prove that $R$ is clean we have to show that $\mathrm{Max}\ R$ is totally disconnected. Let $C\ne\emptyset$ be a connected component of $\mathrm{Max}\ R$. Since $C$ is closed $C=V_M(A)$ for some ideal $A$ of $R$ and $C$ is homeomorphic to Max $R'$ where $R'=R/A$. By Proposition \ref{P:epi} $R'$ satisfies condition $(1)$. So, $R'$ is indecomposable by Proposition \ref{P:Gelfand} and local by Proposition \ref{P:indecomposable}. It follows that $C$ contains only one point and $\mathrm{Max}\ R$ is totally disconnected. Hence $R$ is clean.

Let $P$ be a maximal ideal. By Proposition \ref{P:epi} $R_P$ satisfies the condition $(1)$. We conclude by Theorem \ref{T:main}.

$(2)\Rightarrow (1)$. Let $M$ be an indecomposable $R$-module. By \cite[Proposition III.1]{Cou07}, there exists a unique maximal ideal $P$ such that $M = M_P$ and its structures of $R$-module and $R_P$-module coincide. So, $M$ is pure-projective or pure-injective by Theorem \ref{T:main}.

The last assertion holds by \cite[Corollary II.3]{Cou07}.
\end{proof}

\begin{corollary}\label{C:main2}
Let $R$ be a commutative ring $R$ satisfying the conditions of Theorem \ref{T:main1}. Then the following assertions hold:
\begin{enumerate}
\item each indecomposable $R$-module is pure-injective if $R$ verifies one of the following two conditions:
\begin{enumerate}
\item $R_P$ is a maximal valuation ring for each maximal ideal $P$;
\item  there are no isolated points in $\mathrm{Max}\ R$.
\end{enumerate}
\item consequently, there exists an indecomposable projective $R$-module which is not pure-injective if there exists an isolated point $P$ of $\mathrm{Max}\ R$ such that $R_P$ is a discrete valuation domain of rank one satisfying rank($\widetilde{R_P}$)$=2$.
\end{enumerate}
\end{corollary}
\begin{proof}
$(1)$. It is obvious if $R$ satisfies $(a)$. Let $P$ be a non isolated point of $\mathrm{Max}\ R$. It follows that the pure ideal $0_P$ is not finitely generated. So, $R_P=R/0_P$ is not pure-projective by Lemma \ref{L:fp}, whence $R_P$ is maximal.

$(2)$ is an immediate consequence of $(1)$.
\end{proof}

A commutative ring $R$ is {\it locally Noetherian} if $R_P$ is Noetherian for each maximal ideal $P$.

\begin{corollary} \label{C:locnoe}
Let $R$ be a locally Noetherian ring. Then the following conditions are equivalent:
\begin{enumerate}
\item each indecomposable $R$-module is pure-projective or pure-injective; 
\item $R$ is a clean ring and for each maximal ideal $P$, $R_P$ is either an Artinian valuation ring or a discrete valuation domain of rank $1$ satisfying rank($\widetilde{R_P}$)$\leq 2$.
\end{enumerate}
\end{corollary}

\begin{example}\label{E:global1}
\textnormal{If $R$ is the ring defined in \cite[Example III.8]{Cou07} or if $R$ satisfies the equivalent conditions of \cite[Theorem III.4]{Cou07} then $R$ is a ring for which each indecomposable module is pure-injective and uniserial.}
\end{example}

\begin{corollary}
Let $R$ be a commutative Noetherian ring. Then the following conditions are equivalent:
\begin{enumerate}
\item each indecomposable $R$-module is pure-projective or pure-injective;
\item there exists a positive integer $n$ such that $R$ is the direct product of $n$ rings $(R_i)_{1\leq i\leq n}$, where $R_i$ is either an Artinian valuation ring or a discrete valuation domain of rank $1$ satisfying rank($\widetilde{R_i}$)$\leq 2$ for each $i,\ 1\leq i\leq n$.
\end{enumerate}
\end{corollary}
\begin{proof}
$(2)\Rightarrow (1)$ follows from Theorem \ref{T:main} because we have $M=\bigoplus_{1\leq i\leq n}R_i\otimes_RM$ for each $R$-module $M$.

$(1)\Rightarrow (2)$. Since $R$ is Noetherian the set of its minimal prime ideals is finite. These ideals are comaximal because $R$ is arithmetical. So, $R$ is a finite direct product of Noetherian valuation rings. We complete the proof with Corollary \ref{C:locnoe}.
\end{proof}

\begin{example}\label{E:global}
Let $D$ be a discrete valuation domain of rank $1$ and $J$ its maximal ideal. Assume that rank($\widetilde{D}$)$=2$ and $D$ contains a subring $K$ which is a field (for instance, the Nagata's example of characteristic $2$). Let $I$ be an infinite set. Consider the unitary $K$-subalgebra $R$ of $D^{I}$ generated by $D^{(I)}$. Then each indecomposable $R$-module is pure-projective or pure-injective and there exist indecomposable projective $R$-modules which are not pure-injective. Moreover, $R$ is locally Noetherian but not Noetherian.
\end{example}
\begin{proof}
For each $i\in I$, let $e_i=(\delta_{i,j})_{j\in I}$ where $\delta_{i,j}$ is the Kronecker symbol. Let $P$ be a maximal ideal of $R$. Either $e_i\in P$ for each $i\in I$ and in this case $P=\bigoplus_{i\in I}Re_i=D^{(I)}$ and $R_P=R/P\cong K$; or there exists $i\in I$ such that $e_i\notin P$, and in this case $P=Je_i+R(1-e_i)$ and $R_P=R/R(1-e_i)\cong D$. It is easy to prove that $R$ is a clean ring  by showing that each element of $R$ is the sum of a unit with an idempotent. We conclude by using Corollary \ref{C:locnoe}. For each $i\in I$, $Re_i$ is an indecomposable projective $R$-module which is not pure-injective.
\end{proof}

\section{Some examples in non-commutative case}

The main goal of this section is to give some examples of non-commutative rings for which each indecomposable right (or left) module is pure-projective or pure-injective. We will use the Morita equivalence between two rings. For this we refer to \cite[Chapter 6, Sections §21 and §22]{AnFu92} the book by Anderson and Fuller.

\begin{proposition}\label{P:equivalence}
Let $R$ and $S$ be two rings. Consider the following conditions:  
\begin{enumerate}
\item each indecomposable right $R$-module is pure-projective or pure-injective;
\item each indecomposable right $S$-module is pure-projective or pure-injective;
\item each indecomposable left $R$-module is pure-projective or pure-injective;
\item each indecomposable left $S$-module is pure-projective or pure-injective. 
\end{enumerate}
If $R$ and $S$ are Morita equivalent then $(1)\Leftrightarrow (2)$ and $(3)\Leftrightarrow (4)$.
\end{proposition}
\begin{proof}
We use the same terminology and the same notations as in \cite[Chapter 6, Sections §21 and §22]{AnFu92}. Let $F$ be the category equivalence $_R\mathbf{M}\rightarrow {}_S\mathbf{M}$ and $G$ the inverse equivalence of $F$. Recall that a short sequence $\Sigma$ of modules is pure-exact if and only if the sequence Hom$(K,\Sigma)$ is exact for each finitely presented module $K$. Clearly, if $K$ is a left $R$-module, $K$ is finitely presented if and only if $F(K)$ is finitely presented. If $K,\ L$ are left $R$-modules, $F$ induces a group isomorphism between Hom$_R(K,L)$ and Hom$_S(F(K),F(L))$. So, if $\Sigma$ is a short sequence of $R$-modules, $\Sigma$ is pure-exact if and only if $F(\Sigma)$ is pure-exact. Then, it is easy to show that, if $K$ is a left $R$-module, $K$ is pure-projective (resp. pure-injective) if and only if $F(K)$ is pure-projective (resp. pure-injective). On the other hand, it is well known that $K$ is indecomposable if and only if $F(K)$ is indecomposable. Now, we easily complete the proof.
\end{proof}

A ring $R$ is {\it called strongly clean} if each element of $R$ is the sum of an idempotent and a unit that commute.

\begin{corollary}
Let $R$ be a commutative ring and $n>1$ an integer. Let $R_n$ be the matrix ring $\mathbb{M}_n(R)$. Then the following conditions are equivalent:
\begin{enumerate}
\item each indecomposable $R$-module is pure-projective or pure-injective;
\item each indecomposable right $R_n$-module is pure-projective or pure-injective;
\item each indecomposable left $R_n$-module is pure-projective or pure-injective.
\end{enumerate}
Moreover, if these conditions are satisfied, $R_n$ is strongly clean.
\end{corollary}
\begin{proof}
We deduce the equivalence of the three conditions from \cite[22.3. Corollary]{AnFu92} and Proposition \ref{P:equivalence}.

$R$ is clean and by Theorem \ref{T:main} $R_P$ is Henselian for each maximal $P$. So, $R_n$ is strongly clean by \cite[Corollary 2]{Cou08}.
\end{proof}

\begin{example}
\textnormal{If $R$ is the ring of Example \ref{E:local1} or one of the rings of Example \ref{E:global1}, then for each integer $n>1$, each indecomposable left (resp. right) module over $\mathbb{M}_n(R)$ is pure-injective.}

\textnormal{If $R$ is the ring of Example \ref{E:local} or of Example \ref{E:global}, then for each integer $n>1$, each indecomposable left (resp. right) module over $\mathbb{M}_n(R)$ is pure-projective or pure-injective, and there exist indecomposable left (right) $\mathbb{M}_n(R)$-modules which are not pure-injective.}
\end{example}

Below we give an example of a non-commutative ring $R$ whose  indecomposable right modules are pure-injective (injective) and for which there exists an indecomposable left module which is neither pure-injective (injective) nor pure-projective (projective). So, $R$ is not Morita equivalent to a commutative ring.

\begin{example}\label{E:1}
\textnormal{Let $K$ be a field, $V$ a vector space over $K$ which is not of finite dimension, $S=\mathrm{End}_K(V)$, $J$  the set of finite rank elements of $S$ and $R$ be the unitary $K$-subalgebra of $S$ generated by $J$. Then, $R$ is von Neumann regular, $V$ and ${}_RR/J\cong K$ are the sole types of simple left modules, and, $V^*=\mathrm{Hom}_K(V,K)$ and $R_R/J\cong K$ are the sole types of simple right modules. It is easy to check that $V^*$ and $R_R/J$ are injective, whence $R$ is a right V-ring. On the other hand each right $R$-module contains a simple module. So, every indecomposable right module is simple. It follows that every indecomposable right module is pure-injective (injective). It is well known that $V$ is FP-injective,  $V^{**}$ is the injective hull of $V$ and $V\subset V^{**}$. So, $V$ is not pure-injective but it is pure-projective. We have $JV^{**}=V$. So, the structure of $R$-module of $V^{**}/V$ coincides with its one of $K$-vector space. On the other hand $V^{**}/V$ is of infinite rank over $K$. Let $M$ be the inverse image of a finite dimensional vector subspace of $V^{**}/V$ by the natural map $V^{**}\rightarrow V^{**}/V$. Then $M$ is indecomposable and it is finitely generated but not finitely presented, ($M$ finitely presented successively implies  $M/JM=M/V$ is finitely presented, $K\cong R/J$ is finitely presented over $R$ and $J$ is finitely generated that is not true). By Lemma \ref{L:fp} $M$ is not pure-projective, and since $M\subset V^{**}$, $M$ is not pure-injective too.}
\end{example}



\end{document}